\newtheorem{thm}{Theorem}[section]
\newtheorem{lem}[thm]{Lemma}
\newtheorem{prop}[thm]{Proposition}
\theoremstyle{definition}
\newtheorem{defn}[thm]{Definition}
\theoremstyle{remark}
\numberwithin{equation}{section}
\newcommand{\abs}[1]{\left\vert#1\right\vert}
\newcommand{\R}{{\mathbb R}}
\newcommand{\Z}{{\mathbb Z}}
\newcommand{\C}{{\mathbb C}}
\newcommand{\F}{{\mathbb F}}
\title{Mod 2 cohomology ring of a kind of orbit configuration space}
\author{Hao Li}
\begin{document}
\maketitle
\begin{abstract}
In this paper we caculate mod 2 cohomology ring of $F_{\mathbb{Z}_2^m}(\mathbb{R}^m,n)$ , which is local representation of orbit congfiguration spaces over small covers. We construct a differntial graded algebra, and there is a ring isomorphism between its mod 2 cohomology ring and  $H^*(F_{\mathbb{Z}_2^m}(\mathbb{R}^m,n),\mathbb{Z}_2)$. This idea can also be applied to calculate mod 2 cohomology ring of complement space of real arrangements.
\end{abstract}
\section{Introduction}
Let X be a topological space. We will consider configuration space of n ordered distinct points in X:
$$F(X,n)=\{(x_1,\ldots,x_n)\in X^n| x_i\neq x_j,\quad \forall  \quad i\neq j \}$$
If X admits a group action $G\times X \rightarrow X$, we consider orbit configuration space:
$$F_G(X,n)=\{(x_1,\ldots,x_n)\in X^n| G(x_i)\cap G(x_j)=\emptyset,\quad \forall  \quad i\neq j \}$$

The notion of configuration space was introduced in physics in 1940's. In mathematics, configuration spaces were first introduced by Fadell and Neuwirth\cite{[FN]} in 1962.   

The classical configuration space is $F(\R^2, n)\cong F(\C,n)$, it is exactly the complement space of the union of finite hyperplanes in $\C^n$, its fundamental group eqauls to classical pure braid group. In 1969, Arnol'd\cite{[VA]} computed the cohomology ring of $F(\C,n)$, it is the form of Orlik-Solomon algebra in arrangement theory. $F(\R^k,n)$ is the complement of finite union of linear subspaces of codimension k in $\R^{nk}$, F.R.Cohen\cite{[CF]} calculated its integral cohomology ring as a free Lie algebras with each generator corresponding to a codimension-k subspace. In 2000, Feichtner and Ziegler\cite{[FZ00]} determined $H^*(F(S^k,n);\Z)$; in 2001\cite{[FZ02]}, they  computed $H^*(F_{\Z_2}(S^k,n);\Z)$ ,  where the group aciton is the antipodar map.

If X is a smooth complex projective variety. In 1994, Fulton-MacPherson\cite{[FM]} proved that the rational cohomology ring can be computed from the rational cohomology ring of X and the Chern class of X. Totaro\cite{[To]} improved their work by proving that Chern class is actually irrevalent.

M.A. Xicoténcatl\cite{MA} did a lot of work in his Ph.D. thesis  on orbit configuration space where $G$ acts freely on $M$, he computed the cohomology and loop space homology of  some free action spaces, such as complements of arrangements, spaces of polynomials and spaces of type $K(\pi, 1)$.

But for other topological spaces with non-free group action, it becomes much harder to compute their homotopy groups and cohomology rings, tools used in above examples can no longer be applied to the computation. 

There is a very typical kind of spaces with non-free group action. In 1991, Davis and Januszkiewicz\cite{[DJ]} introduced four classes of nicely behaving manifolds over simple convex polytopes---small covers, quasi-toric manifolds, (real) moment-angle manifolds which have become important objects in toric topology. We are interested to study the orbit configuration spaces $F_{G_d^m}(M,n)$ for a dm-dimensional $G_d^m$-manifold $M$ over a simple convex m-polytope P ---Where $M$ is a small cover and $G_d^m=\Z_2^m$ when $d=1$, and a quasi-toric manifold and $G_d^m=T^m$ when $d=2$. We expect to find the relation between the algebraic topology of $F_{G_d^m}(M,n)$ and combinatoric informations of polytope P. In 2008, Junda Chen\cite{[CLW]} gave an explicit formula for the Euler characteristic of $F_{G_d^m}(M,n)$ in terms of the h-vector of P and gave a description of homotopy type when $n=2$. But there is still some distance between this work and our expectation.

In this paper, we focus on $F_{\mathbb{Z}_2^m}(\mathbb{R}^m,n)$. Since it is the local representation of $F_{\mathbb{Z}_2^m}(M,n)$, the results in this paper will help to improve the study of $F_{\mathbb{Z}_2^m}(M,n)$. Besides, $\Z_2^m \curvearrowright \R^m$ is a typical non-free 
group action over Euclidean space, it can be an interesting example for the computation of orbit configuration spaces.

In the pointview of arrangements, $F_{\mathbb{Z}_2^m}(\mathbb{R}^m,n)$ can be regarded as complement space of a collection of subspaces in Euclidean space. The study of complex arrangement is complete. In 1982, Richard Randell \cite{Ran} gave a nice description of the fundamental group of complement of complexification of real arrangements. In 1995, De Concini and Procesi\cite{[DC]} constructed a rational model using only labeled lattice proved that the rational cohomology ring are determined by this lattice. In 1999, Sergey Yuzvinsky \cite{[Yu]} constructed rational model on atomic complex to simplify De Concini and Procesi's result.

There is little result for real arrangements. Goresky-MacPherson describe the integral homology group. In 2000, Mark de Longueville and Carsten A. Schultz \cite{[LS]} computed the integral cohomology ring of geometric ($\geq 2$) real arrangements. For general real arrangements,  there isn't a good tool to get its cohomology ring information. 

Our main result is an description of $H^*(F_{\mathbb{Z}_2^m}(\mathbb{R}^m,n);\Z_2)$. Let $\mathcal{A}$ be a real arrangement in Euclidean space, $M(\mathcal{A})$ denote its complement space. A differential graded algebra $\widetilde{D}$ is constructed based on the intersection poset of arrangement $\mathcal{A}$. And $H^*(\widetilde{D},\Z_2)$ is computable.
\begin{thm}
There is a ring isomorphism $H^*(F_{\mathbb{Z}_2^m}(\mathbb{R}^m,n);\Z_2)\cong H^*(\widetilde{D},\Z_2)$
\end{thm}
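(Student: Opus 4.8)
The plan is to realize $F_{\mathbb{Z}_2^m}(\mathbb{R}^m,n)$ as the complement $M(\mathcal{A})$ of an explicit real subspace arrangement and then to show that the combinatorially defined algebra $\widetilde{D}$ is a cochain model for it. First I would make the arrangement explicit. With $\Z_2^m$ acting on $\R^m$ by coordinate sign flips, the orbits of $x_i$ and $x_j$ meet precisely when $|x_{i,k}|=|x_{j,k}|$ for every coordinate $k$, so the forbidden locus for the pair $(i,j)$ is the union over sign vectors $\epsilon\in\{\pm1\}^m$ of the codimension-$m$ linear subspaces $\{x_{i,k}=\epsilon_k x_{j,k}\ :\ 1\le k\le m\}$ inside $\R^{nm}$. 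Hence $F_{\mathbb{Z}_2^m}(\mathbb{R}^m,n)=M(\mathcal{A})$, where $\mathcal{A}$ collects all of these subspaces, and its intersection poset $L(\mathcal{A})$ (together with the codimension data of each flat) is exactly the input on which $\widetilde{D}$ is built. Passing to $\Z_2$ coefficients is what makes the problem tractable: all the orientation and sign bookkeeping in the local contributions collapses.

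The second step is to match the two sides additively. I would invoke the Goresky--MacPherson formula, which for a real subspace arrangement gives
$$\widetilde H^{\,i}(M(\mathcal{A});\Z_2)\cong\bigoplus_{x\in L_{>\hat 0}}\widetilde H_{\operatorname{codim}(x)-i-2}(\Delta(\hat 0,x);\Z_2),$$
where $\Delta(\hat 0,x)$ is the order complex of the open interval below $x$; the Ziegler--\v{Z}ivaljevi\'c description of the union together with Alexander duality would serve equally well. The construction of $\widetilde{D}$ should be arranged so that $H^*(\widetilde{D},\Z_2)$ splits into precisely these local pieces indexed by $L$. To verify this I would filter $\widetilde{D}$ by poset rank and identify the pages of the resulting spectral sequence with the simplicial chains of the order complexes $\Delta(\hat 0,x)$, obtaining a degree-by-degree isomorphism of graded $\Z_2$-vector spaces.

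The heart of the argument, and the main obstacle, is promoting this additive isomorphism to a ring isomorphism. For this I would construct an explicit morphism of differential graded algebras relating $\widetilde{D}$ to a genuine cochain DGA of $M(\mathcal{A})$ --- most naturally the cellular cochains of a regular cell model obtained from a stratification of $\R^{nm}$ adapted to $\mathcal{A}$, or a zig-zag of quasi-isomorphisms through an intermediate Mayer--Vietoris (arrangement) double complex carrying a multiplicative structure. The key point is that the product on $\widetilde{D}$, defined purely combinatorially from joins of flats in $L(\mathcal{A})$, must reproduce the cup product on $H^*(M(\mathcal{A});\Z_2)$: concretely, a class localized near a stratum $x$ and a class localized near $y$ should multiply to the class localized near $x\vee y$ with the combinatorially predicted coefficient, and vanish when $x$ and $y$ fail to meet transversally. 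Since the additive isomorphism is already in hand, it suffices to show that the constructed map is a quasi-isomorphism and is multiplicative (not merely up to homotopy); the multiplicativity is where the genuine work lies.

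I would then run an induction on the rank of $L(\mathcal{A})$ via deletion--restriction, reducing the multiplicative comparison to the base cases of a single codimension-$m$ subspace (whose complement is homotopy equivalent to $S^{m-1}$) and of "independent" flats, where the complement factors and the cup products are forced. The expected difficulty is that the orbit arrangement $\mathcal{A}$ is \emph{not} geometric: its intersection lattice is not a matroid, so the Orlik--Solomon and De Longueville--Schultz formalisms do not apply verbatim, and several sign-translates of a diagonal can meet in higher codimension than a transverse intersection would predict. Showing that $\widetilde{D}$ encodes exactly this non-transverse geometry --- so that its product matches the cup product precisely in these degenerate configurations --- is the step I expect to consume most of the effort.
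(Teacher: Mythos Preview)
Your setup and the additive step match the paper: you realize $F_{\mathbb{Z}_2^m}(\mathbb{R}^m,n)$ as the complement of the codimension-$m$ sign-diagonal arrangement and invoke Goresky--MacPherson to get the graded $\Z_2$-vector space, which the paper does via the atomic-complex identification $\bigoplus_{u} H_k(\Delta\Delta[W,u])\cong H_k(D)$.

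Where you diverge from the paper is the multiplicative step, and here you have both missed the intended shortcut and misdiagnosed the obstruction. You assert that the de Longueville--Schultz formalism ``does not apply verbatim'' because the intersection lattice is not a matroid, and therefore propose building a cellular or Mayer--Vietoris DGA model and running a deletion--restriction induction. But the de Longueville--Schultz intersection-product formulas (the paper's Propositions~3.2 and~3.3) hold for \emph{any} real linear subspace arrangement; no matroid or geometric-lattice hypothesis is needed. What fails in general is only that the Goresky--MacPherson isomorphism and those product formulas depend on choices of generic points $x^{u}$, so over $\Z$ one does not get a purely combinatorial description unless the arrangement is $(\ge 2)$. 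The paper's entire multiplicative argument is the one-line Lemma~4.1: two choices of generic points give maps $\phi^{x^u}$ and $\phi^{\tilde x^u}$ that differ only by orientation, hence agree mod~2; therefore over $\Z_2$ the product is independent of generic points and is given combinatorially by $a\otimes b\mapsto \vee_*(a\times b)$ when $u+v=W$ and $0$ otherwise (Theorem~4.2), which is exactly the product you wrote into $\widetilde{D}$.

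So your proposed route (explicit DGA map to a cell model, plus induction) is not wrong in principle, but it is substantially heavier than what is needed, and your stated ``main obstacle'' (non-geometric lattice, non-transverse sign-translates) is a red herring. The actual obstacle for integral coefficients is the failure of the $(\ge 2)$ condition, and the paper's point is precisely that passing to $\Z_2$ erases that obstacle for free, after which the de Longueville--Schultz product formulas give the ring isomorphism immediately.
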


This method can be applied to calculation of mod 2 cohomology ring of any real arrangements. 

This paper is organised as follows. In section 2 and 3, we give a brief introduction on the notions of small covers and quasi-toric manifolds, subspace arrangements, Goresky-MacPherson isomorphism and intersection product in arrangements theory; in section 4, we construct a differential graded algebra to describe the cohomology ring of $H^*(F_{\mathbb{Z}_2^m}(\mathbb{R}^m,n);\Z_2)$; in section 5, we give a simple example.
\section{Preliminary}
\subsection{Small covers and quasi-toric manifolds}
By the definitions in \cite{[DJ]}, let $P^m$ be an $m$-dimensional simple convex polytope. 

Let $G_d^m$ be $\Z_2^m$, $\F_d=\R$ if d=1; and the torus $T^m$, $\F_d=\C$ if $d=2$.

The natural action of $G_d^m$ on $\F_d^m$ is called the \textit{standard representation} , and the orbit space is $\R^m_+$.

 A $dm$-dimensional $G_d^m$-manifold $M^{dm}$ over $P^m$, is a smooth closed $dm$-dimensional manifold $M^{dm}$ with a locally standard $G_d^m$-action such that the orbit space is $P^m$. 
 
 A $G_d^m$-manifold $M^{dm}$ is called \textit{small cover} if $d=1$ and \textit{quasi-toric manifold} if $d=2$. 
\subsection{Local representation}
Since it is difficult to describe the topology of $F_{G_d^m}(M^{dm},n)$, we first consider its local representation.

For $d=1$,the orbit configuration space is $F_{\Z_2^m}(M^m,n)$, its local representation is $F_{\Z_2^m}(\R^m,n)$.

For $d=2$, the orbit configuration space is $F_{T^m}(M^{2m},n)$, its local representation is $F_{T^m}(\R^{2m},n)$.

In this paper, we only consider the case when d=1.

We can observe that\\ $F_{\Z_2^m}(M^{m},n)=\{(x_1,x_2,...,x_n)\in (\R^m)^n|x_i \in \R^m, \Z_2^m(x_i)\cap \Z_2^m(x_j)=\emptyset , \forall i \neq j\}\\ \hspace*{1.9cm}
=(\R^m)^n\setminus \underset{1\leq i < j\leq n}{\bigcup} \{(x_1,x_2,...,x_n)\in (\R^m)^n|\Z_2^m(x_i)=\Z_2^m(x_j)\}$

Let $A_{ij}\triangleq \{(x_1,x_2,...,x_n)\in (\mathbb{R}^m)^n|x_i\in \mathbb{Z}_2^m(x_j)\}$, it is the union of $2^m$ subspaces in $(\mathbb{R}^m)^n$, each subspace is in the form $$A_{ij}^g\triangleq \{(x_1,x_2,...,x_n)\in (\mathbb{R}^m)^n|x_i=g(x_j),g \in \mathbb{Z}_2^m\}$$ with codimension $m$.

Thus $F_{\mathbb{Z}_2^m}(\mathbb{R}^m,n)$ can be regarded as the complement space of subspace arrangement $\mathcal{A}$ in $(\mathbb{R}^m)^{\times n}$. $\mathcal{A}$ consists of $C_n^2 \times 2^m$ subspaces with codimension $m$. $\mathcal{A}=\{A_{ij}^g|1\leq i<j\leq n, g \in \mathbb{Z}_2^m\}$.

\subsection{Arrangement theory}
In this part, we review some useful concepts about subspace arrangement in \cite{[LS]}.
\subsubsection{\textbf{Notations}}
Let $\mathcal{A}$ be a linear subspace arrangement in a finite-dimensional $\mathbb{R}$-vector space W, let $\mathit{u}\subseteq \mathit{v}\subseteq W$ be linear subspaces.
\begin{itemize}
\item$\pi^\mathit{u}$: the quotient map $W \rightarrow W/\mathit{u}$
\item$\pi^{\mathit{u},\mathit{v}}$: the quotient map $W/\mathit{u} \rightarrow W/\mathit{v}$
\item$\mathcal{A}_\mathit{u}\triangleq\{\mathit{z}\in \mathcal{A}|\mathit{u}\subseteq \mathit{z}\}$ the subarrangement in W
\item$\mathcal{A}/\mathit{u}\triangleq \{\pi^\mathit{u}(\mathit{z})|\mathit{z}\in \mathcal{A}_\mathit{u}\}$ the arrangement in $W/\mathit{u}$
\item$M(\mathcal{A})$ denote the complement space $W \setminus \bigcup \mathcal{A}$
\item $P$  intersection poset, denote the set of all intersections of subset of $\mathcal{A}$.\\
The intersection poset $P$ is partially ordered by the reverse inclusion;\\
It has maximal element $\top \triangleq \bigcap \mathcal{A}$ and minimal element $W \triangleq \bigcap \emptyset$; \\
The join operation $\vee$ in $P$ is given by intersection;\\ 
$P$ is furnished with a dimension function $d:P \rightarrow \mathbb{N}$; \\
For $\mathit{u},\mathit{v} \in P$, we denote by $[\mathit{u},\mathit{v}],(\mathit{u},\mathit{v}],[\mathit{u},\mathit{v})$ the respective intervals in $P$.
\end{itemize}

For any partially ordered set Q, denote by $\triangle(Q)$ the order complex of Q whose simplices are given by chains in Q.

\subsubsection{\textbf{generic points}}
To establish a map between intersection poset $P$ and subspace arrangement, we have to introduce the concept of generic points.

For $\mathit{u}\in P$, generic point $x^\mathit{u}$ means either a point in $\mathit{u}\setminus \underset{\mathit{z}\in (\mathit{u},T]}\bigcup \mathit{z}$ or a map
\\ \begin{tabular}{rcc}
$x^{\mathit{u}}: [W,\mathit{u}]$&$\rightarrow$&$W/\mathit{u}$\\
$\mathit{v}$&$\mapsto$&$x_\mathit{v}^\mathit{u}$
\end{tabular} 
with $x_\mathit{v}^\mathit{u}\in \pi^\mathit{u}(v)\setminus \underset{\mathit{v}^\prime \in (\mathit{v},\mathit{u}]}\bigcup \pi^\mathit{u}(\mathit{v}^\prime)$ 

 Let $\mathit{u}\in P$, with generic points $x^\mathit{u}$, define a affine map
$\phi^{x^\mathit{u}}: \triangle[W,\mathit{u}]\rightarrow W/\mathit{u}$ which is affine on simplices and satisfies\\ 
$\phi^{x^\mathit{u}}(\mathit{w})=x_\mathit{w}^\mathit{u}\qquad \mathit{w}\in [W, \mathit{u}]$ 

Note that we can identify the abstract simplicial complex with its geometric realization.

\section{Goresky-MacPherson isomorphism and products}

This chapter mainly states the results in \cite{[LS]}. \subsection{Goresky-MacPherson isomorphism}
Given an arrangement $\mathcal{A}$ in W. If $\epsilon>0$ is a real number. $B_\epsilon$ denotes the open $\epsilon$-ball in W. Then
\begin{equation}
H^k(W \setminus \bigcup \mathcal{A})\xrightarrow[\cong]{i^*}H^k(B_\epsilon \setminus\bigcup\mathcal{A})\xrightarrow[\cong]{\cap[W]}H_{d(W)-k}(W, \bigcup\mathcal{A}\cup\mathcal{C}B_\epsilon)
\end{equation}
here $\mathit{i}$ denotes inclusion, [W] is the orientation class of W, and $\mathcal{C}B_\epsilon$ is the complement of $B_\epsilon$. 

The first isomorphism is trivial induced by inclusion map, the second isomorphism derives from Alexander Duality.

Now if we want to describe the cohomology ring of $M(\mathcal{A})$, we will work mainly in $H_*(W,\bigcup\mathcal{A}\cup\mathcal{C}B_\epsilon)$ with intersection product $\bullet$ given by $(\alpha \cap [W])\bullet (\beta \cap[W])=(\alpha \cup \beta)\cap [W]$, $\quad\alpha,\beta \in H^*(B_\epsilon \setminus\bigcup\mathcal{A})$.

Recall the map $\phi^{x^\mathit{u}}:\triangle[W,\mathit{u}]\rightarrow W/\mathit{u}$ 

For a simplex $\sigma \in \triangle[W,\mathit{u}]$.    $\sigma=<v_0,\dots,v_k>,v_0<\dots<v_k$, $ \phi^{x^\mathit{u}}(\sigma) \subseteq \pi^\mathit{u}({v_0})$, $\phi^{x^\mathit{u}}(\sigma)\cap \pi^\mathit{u}(v_k)=\{x_{v_k}^\mathit{u}\}$. 

$ \phi^{x^\mathit{u}}(\triangle(W,\mathit{u}])\subseteq \bigcup\mathcal{A}/\mathit{u}$, $\phi^{x^\mathit{u}}(\triangle[W,\mathit{u}))\subseteq W/\mathit{u}\setminus B_\epsilon^{W/\mathit{u}}$ (for small enough $\epsilon$).

Then we have map of pairs 
$$\phi^{x^\mathit{u}}: (\triangle[W,\mathit{u}],\triangle(W,\mathit{u}]\cup\triangle[W,\mathit{u}))\rightarrow(W/\mathit{u},\bigcup\mathcal{A}/\mathit{u}\cup\mathcal{C}B_\epsilon^{W/\mathit{u}})$$

We simplify $\Delta\Delta[W,\mathit{u}]\triangleq(\triangle[W,\mathit{u}],\triangle(W,\mathit{u}]\cup\triangle[W,\mathit{u}))$

Because $ (\pi^\mathit{u})^{-1}(\cup\mathcal{A}/\mathit{u})=\cup\mathcal{A}_\mathit{u}\subseteq\mathcal{A}$ and $(\pi^\mathit{u})^{-1}(\mathcal{C}B_\epsilon^{W/\mathit{u}})\subset \mathcal{C}B_\epsilon$

Thus we can consider the following maps:
\begin{equation}
H_k(\Delta\Delta[W,\mathit{u}])\xrightarrow{\phi_*^{x^\mathit{u}}}H_k(W/\mathit{u},\bigcup\mathcal{A}/\mathit{u}\cup\mathcal{C}B_\epsilon^{W/\mathit{u}})\xrightarrow{\pi_!^\mathit{u}}H_{d(\mathit{u})+k}(W,\bigcup\mathcal{A}\cup\mathcal{C}B_\epsilon)
\end{equation}
$\pi_!^\mathit{u}$ is given by $\alpha \cap [W/\mathit{u}]\mapsto (\pi^\mathit{u})^*\alpha\cap[W] \qquad \alpha \in H^*(W/\mathit{u})$

Then we can introduce well known Goresky-MacPherson isomorphism.
\begin{thm}[\textbf{Goresky-MacPherson isomorphism}]
Let $\mathcal{A}$ be an arrangement in W and $x^\mathit{u}$ be a choice of gneric points. Then the map 

$\sum_{\mathit{u}\in[W,\top]}\pi_!^\mathit{u}\circ \phi_*^{x^\mathit{u}}:\bigoplus_{\mathit{u}\in[W,T]}H_*(\Delta\Delta[W,\mathit{u}])\longrightarrow H_*(W,\bigcup\mathcal{A}\cup\mathcal{C}B_\epsilon)$
\\is an isomorphism as group.
\end{thm}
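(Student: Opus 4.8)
The plan is to prove that the explicit maps of (3.2) realise the abstract Goresky--MacPherson decomposition by analysing the target $H_*(W,\bigcup\mathcal{A}\cup\mathcal{C}B_\epsilon)$ through a stratification of the arrangement. Every intersection $u\in P$ determines an open stratum $u^{\circ}=u\setminus\bigcup_{z\in(u,\top]}z$, and these strata together with $\mathcal{C}B_\epsilon$ partition $\bigcup\mathcal{A}\cup\mathcal{C}B_\epsilon$. First I would organise this partition into an increasing filtration of the pair $(W,\bigcup\mathcal{A}\cup\mathcal{C}B_\epsilon)$ indexed by the dimension $d(u)$ of the strata, so that passing from one stage to the next adjoins the closures of the strata of a fixed dimension. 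The purpose of choosing the generic points $x^u$ and the affine maps $\phi^{x^u}$ is precisely to produce, for each $u$, a well-defined relative cycle supported in a normal slice to $u^{\circ}$; the summand $\pi_!^u\circ\phi_*^{x^u}$ then feeds the homology of $\Delta\Delta[W,u]$ into the filtration stage of dimension $d(u)$.

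The heart of the argument is a local computation at each stratum. Near a generic point $x^u_u\in u^{\circ}$ a normal slice to $u$ is modelled on $W/u$, and inside $W/u$ the arrangement looks like the localised arrangement $\mathcal{A}/u$ with apex at the image point; because $u^{\circ}$ avoids every deeper intersection $z\in(u,\top]$, this local picture is a genuine product $u\times(W/u)$ up to a small ball. I would therefore excise to the slice and identify the local relative homology with $H_*\big(W/u,\bigcup\mathcal{A}/u\cup\mathcal{C}B_\epsilon^{W/u}\big)$, and then with $H_*(\Delta\Delta[W,u])$ via $\phi^{x^u}_*$, using that $\phi^{x^u}$ sends $\triangle(W,u]$ into $\bigcup\mathcal{A}/u$ and $\triangle[W,u)$ into the complement of the slice ball, exactly as recorded before the theorem statement. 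The Gysin map $\pi_!^u$, $\alpha\cap[W/u]\mapsto(\pi^u)^*\alpha\cap[W]$, is the ``cross with the fibre $u$'' operation and supplies the dimension shift $k\mapsto d(u)+k$; this identifies the associated-graded summand for $u$ with $H_*(\Delta\Delta[W,u])$ and shows $\sum_u\pi_!^u\circ\phi_*^{x^u}$ induces the correct map on the $E^1$-page of the filtration spectral sequence.

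It then remains to show the spectral sequence degenerates. The affine maps $\phi^{x^u}$ make this transparent: since $\phi^{x^u}(\triangle[W,u))$ lands outside the slice ball and $\phi^{x^u}(\triangle(W,u])$ lands in $\bigcup\mathcal{A}/u$, the chain $\pi_!^u\phi^{x^u}_*(\triangle[W,u])$ is a genuine relative cycle in $(W,\bigcup\mathcal{A}\cup\mathcal{C}B_\epsilon)$. Thus each associated-graded class of the filtration lifts to an honest homology class lying in the image of $\sum_{u\in[W,\top]}\pi_!^u\circ\phi_*^{x^u}$, which forces all differentials to vanish and yields surjectivity; injectivity follows from the slice-restriction maps of the previous step, which recover the $u$-component of any class and thereby provide a two-sided inverse summand by summand. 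Assembling these, the total map is an isomorphism of graded groups.

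I expect the main obstacle to be the local model step: verifying rigorously that a normal slice at $x^u_u$ is the product $u\times(W/u)$ compatibly with the filtration, so that excision really produces $H_*(\Delta\Delta[W,u])$, and that under this identification $\phi^{x^u}_*$ is an isomorphism onto the link contribution. The bookkeeping of orientations in the cap products defining $\pi_!^u$ is the other delicate point, although it disappears once we pass to $\mathbb{Z}_2$ coefficients as needed in the sequel.
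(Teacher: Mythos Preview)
The paper does not actually prove this theorem: immediately after the statement it simply records that ``this proposition is originally proved in \cite{[MR]} by means of stratified Morse theory'' and that ``there is another elementary proof in \cite{[LS]}.'' So there is no in-paper argument to compare against; the theorem is quoted as background.

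Your sketch is in the spirit of the elementary approach of de~Longueville--Schultz rather than the original stratified Morse theory argument: filter by the dimension of the strata $u^{\circ}$, identify the associated graded pieces with the local normal-slice data, and then argue that the explicit relative cycles $\pi_!^{u}\phi_*^{x^u}(-)$ split the filtration. As an outline this is reasonable, but one step deserves a warning. In your ``local computation'' you want to identify $H_*\big(W/u,\bigcup\mathcal{A}/u\cup\mathcal{C}B_\epsilon^{W/u}\big)$ with $H_*(\Delta\Delta[W,u])$ \emph{via} $\phi_*^{x^u}$. That identification is not automatic from excision and the product structure alone; it is essentially the statement of the theorem for the smaller arrangement $\mathcal{A}/u$ in $W/u$. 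The standard way out is to run the whole argument by induction on $\dim W - d(\top)$ (or on the number of atoms), so that the local model is covered by the inductive hypothesis; you should make that induction explicit rather than asserting the local isomorphism directly. With that fix, your degeneration/lifting argument is the right shape.
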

This proposition is originally proved in \cite{[MR]} by means of stratified Morse theory. There is another elementary proof in \cite{[LS]}.
\subsubsection{\textbf{Products}}
Let $P,Q$ be two intersection posets. $\triangle(P\times Q)=\triangle(P)\times \triangle(Q)$

If $C_*$ denotes the ordered chain complex, there is the well known map

$\quad C_*(\triangle P)\bigotimes C_*(\triangle Q)\xrightarrow{\times}C_*(\triangle(P\times Q))$  given by
\\$\langle u_0,\dots,u_k \rangle \bigotimes\langle v_0,\dots,v_l \rangle \longmapsto \underset{\substack{0=i_0\leq\dots\leq i_{k+l}=k\\0=j_0\leq \dots \leq j_{k+l}=l\\ \forall r \quad  (i_{r-1},j_{r-1})\neq(i_r,j_r)}}{\sum}\sigma_{i,j}\langle(u_{i_0},v_{j_0}),\dots,(u_{i_{k+l}},v_{j_{k+l}})\rangle$
\\where the $\sigma_{i,j}$ are signs determined by $\sigma_{i,j}=1$ if k=0 or l=0 and by $\partial(a\times b)=\partial a \times b +(-1)^k a\times \partial b$.

Since $\triangle\triangle P\times \triangle\triangle Q=\triangle\triangle (P\times Q)$, this induces a product
 $$\times : H_*(\triangle\triangle P)\otimes H_*(\triangle\triangle Q)\longrightarrow H_*(\triangle\triangle (P\times Q))$$

Let $\mathcal{A}$ be an arrangement in W and $\mathit{u},\mathit{v}\in P$. 

To describe the products on $H_*(W,\bigcup\mathcal{A}\cup\mathcal{C}B_\epsilon)$, we have to know the products on $H_*(\triangle\triangle[W,\mathit{u}])$ and $H_*(\triangle\triangle[W,\mathit{v}]) \quad\forall \quad\mathit{u},\mathit{v}\in P$

When $\mathit{u}+\mathit{v}=W$, $(\pi^{\mathit{u}\cap\mathit{v},\mathit{v}},\pi^{\mathit{u}\cap\mathit{v},\mathit{u}}):W/(\mathit{u}\cap\mathit{v})\rightarrow W/\mathit{v}\times W\mathit{u}$ is an isomorphism. $\epsilon_{\mathit{u},\mathit{v}}$ be the degree of this linear isomorphism.

The join operator \begin{center}
$\vee :$ $\begin{tabular}{rclcc}
$[W,\mathit{v}]$&$\times$&$[W,\mathit{u}]$&$\longrightarrow$&$[W,\mathit{v}\cap \mathit{u}]$\\
$(z$&,&$w)$&$\longmapsto$&$z\cap w$
\end{tabular}$
\end{center} induces a simplicial map of pairs
$\vee:\Delta\Delta[W,\mathit{v}]\times \Delta\Delta[W,\mathit{u}]\rightarrow\Delta\Delta[W,\mathit{u}\cap\mathit{v}]$.

The product is given in the same way as that of intersection posets defined above. We will get the following proposition.
\begin{prop}
Let $\mathcal{A}$ be an arrangement in W and $\mathit{u},\mathit{v}$ intersections in $\mathcal{A}$, such that $\mathit{u}+\mathit{v}=W$. Given generic points $x^\mathit{u}$ and $x^\mathit{v}$ we have for $a \in H_k(\Delta\Delta[W,\mathit{u}])$, $b \in H_l(\Delta\Delta[W,\mathit{v}])$, and the generic points $x^{\mathit{u}\cap\mathit{v}}$ constructed above, that
\\$\pi_!^\mathit{u}(\phi_*^{x^\mathit{u}}(a))\bullet\pi_!^\mathit{v}(\phi_*^{x^\mathit{v}}(b))=\epsilon_{\mathit{u},\mathit{v}}(-1)^{l(d(W)-d(\mathit{u}))}\pi_!^{\mathit{u}\cap\mathit{v}}(\phi_*^{x^{\mathit{u}\cap\mathit{v}}}(\vee_*(a\times b)))$.
\end{prop}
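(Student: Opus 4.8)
The plan is to dualize the intersection product into a cup product, exploit the transversality hypothesis $u+v=W$ to factor every map through the common quotient $W/(u\cap v)$, and then carry the resulting class back along the Goresky--MacPherson maps. To begin, I would unwind the definition of $\bullet$: writing $\phi_*^{x^u}(a)=\gamma\cap[W/u]$ and $\phi_*^{x^v}(b)=\delta\cap[W/v]$ for cohomology classes $\gamma$ on $W/u$ and $\delta$ on $W/v$ (via the local Goresky--MacPherson duality in the quotients), the defining formula $\pi_!^u(\alpha\cap[W/u])=(\pi^u)^*\alpha\cap[W]$ gives $\pi_!^u(\phi_*^{x^u}(a))=(\pi^u)^*\gamma\cap[W]$ and similarly for $v$. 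The definition of the intersection product then collapses the left-hand side to the single expression $\big((\pi^u)^*\gamma\cup(\pi^v)^*\delta\big)\cap[W]$, so everything reduces to identifying this one cup product.

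Next I would use transversality. Since $u+v=W$, both $\pi^u$ and $\pi^v$ factor through $\pi^{u\cap v}\colon W\to W/(u\cap v)$, and $(\pi^{u\cap v,v},\pi^{u\cap v,u})$ identifies $W/(u\cap v)$ with $W/v\times W/u$ up to the linear isomorphism whose degree is, by definition, $\epsilon_{u,v}$. Naturality of the cup product under pullback gives $(\pi^u)^*\gamma\cup(\pi^v)^*\delta=(\pi^{u\cap v})^*\big((\pi^{u\cap v,v})^*\delta\cup(\pi^{u\cap v,u})^*\gamma\big)$, and under the identification above the bracketed class is the external cross product $\delta\times\gamma$ on $W/v\times W/u$. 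Capping with $[W]$ and re-applying the definition of $\pi_!^{u\cap v}$ rewrites the left-hand side as $\pi_!^{u\cap v}\big((\delta\times\gamma)\cap[W/(u\cap v)]\big)$, where $[W/(u\cap v)]=\epsilon_{u,v}\,([W/v]\times[W/u])$; this is exactly where the factor $\epsilon_{u,v}$ enters. The standard identity relating cap product and cross product then reduces the argument of $\pi_!^{u\cap v}$ to a sign times $\phi_*^{x^v}(b)\times\phi_*^{x^u}(a)$.

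The remaining, geometric, step is to recognise this cross product. Here the choice of generic points is essential: $x^{u\cap v}$ is constructed from $x^u$ and $x^v$ precisely so that the affine realizations commute with the join, that is $(\pi^{u\cap v,v},\pi^{u\cap v,u})\circ\phi^{x^{u\cap v}}\circ\vee=\phi^{x^v}\times\phi^{x^u}$ as maps of pairs on $\Delta\Delta[W,v]\times\Delta\Delta[W,u]$. Because each $\phi$ is affine on simplices, and hence determined by its values on vertices, this identity can be verified vertex by vertex, where it follows from $\phi^{x^{u\cap v}}(w)=x^{u\cap v}_w$ together with the compatible choice of $x^{u\cap v}_{z\cap w}$. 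Passing to homology yields $\phi_*^{x^v}(b)\times\phi_*^{x^u}(a)=(\text{iso})_*\,\phi_*^{x^{u\cap v}}(\vee_*(a\times b))$, and substituting this into the expression above produces the right-hand side of the proposition.

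The main obstacle is twofold. The genuine content lies in the compatibility identity of the previous paragraph: one must check that three independently chosen systems of generic points can be matched so that their affine realizations commute with the purely combinatorial join $\vee$, and this has to be established at the level of simplicial maps of pairs rather than merely on homology, which is where the explicit construction of $x^{u\cap v}$ does the work. The remaining difficulty is book-keeping: one has to track the Koszul signs from the cap--cross interchange, the reordering of factors forced by the domain $[W,v]\times[W,u]$ of $\vee$, and the degree $\epsilon_{u,v}$ of the linear isomorphism, and verify that their combined effect is exactly the stated coefficient $\epsilon_{u,v}(-1)^{l(d(W)-d(u))}$.
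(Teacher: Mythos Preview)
The paper does not actually prove this proposition: after stating Propositions~3.2 and~3.3 it simply writes ``The proof of Prop 3.2 and Prop 3.3 sees \cite{[LS]}.'' So there is no in-paper argument to compare against; the result is imported wholesale from de Longueville--Schultz.

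Your sketch is a faithful outline of the argument in \cite{[LS]}: unwind $\bullet$ via duality to a cup product on $W$, use $u+v=W$ to factor both pullbacks through $W/(u\cap v)\cong W/v\times W/u$ (picking up $\epsilon_{u,v}$ from the orientation of that linear isomorphism), and then invoke the key geometric compatibility $(\pi^{u\cap v,v},\pi^{u\cap v,u})\circ\phi^{x^{u\cap v}}\circ\vee=\phi^{x^v}\times\phi^{x^u}$, which is precisely what the ``constructed above'' choice of $x^{u\cap v}$ is designed to make true. The sign bookkeeping you flag is exactly the residual work. In short, your proposal supplies what the paper omits and matches the cited source; there is no gap to report.
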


When $\mathit{u}+\mathit{v}\neq W$, there exists a non-trivial linear functional $\Lambda
:W\rightarrow \mathbb{R}$ with the kernel containing $\mathit{u}+\mathit{v}$. This induces functionals $\Lambda_\mathit{u},\Lambda_\mathit{v}$ on $W/\mathit{u},W/\mathit{v}$ respectively. then we can choose generic points $x^\mathit{u}$ and $y^\mathit{v}$ such that

\begin{tabular}{ccc}
$\Lambda_\mathit{u}(x_\mathit{w}^\mathit{u})\geq 0$&$\forall \mathit{w}\in (W,\mathit{u}],$&$\Lambda_\mathit{u}(x_\mathit{W}^\mathit{u})>0$,\\
$\Lambda_\mathit{v}(y_\mathit{w}^\mathit{v})\leq 0 $&$ \forall \mathit{w}\in (W,\mathit{v}],$&$\Lambda_\mathit{v}(y_\mathit{W}^\mathit{v})<0.$
\end{tabular}
\begin{prop}
Let $\mathcal{A}$ be an arrangemnet in W and $\mathit{u},\mathit{v}$ intersections in $\mathcal{A}$ with $\mathit{u}+\mathit{v}\neq W$. Then for generic points $x^\mathit{u}$ and $y^\mathit{v}$ constructed above, the composition 
\\$H_*(\Delta\Delta[W,\mathit{u}])\bigotimes H_*(\Delta\Delta[W,\mathit{v}])
\\ \hspace*{1cm} \xrightarrow{(\pi_!^\mathit{u}\circ \phi_*^{x^\mathit{u}})\bigotimes(\pi_!^\mathit{v}\circ \phi_*^{y^\mathit{v}})} H_*(W,\bigcup\mathcal{A}\cup\mathcal{C}B_\epsilon)\bigotimes H_*(W,\bigcup\mathcal{A}\cup\mathcal{C}B_\epsilon)\\
\hspace*{8cm}\xrightarrow{\quad\bullet\quad}H_*(W,\bigcup\mathcal{A}\cup\mathcal{C}B_\epsilon)$
\\is the zero map.
\end{prop}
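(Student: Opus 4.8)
\emph{Proof proposal.}

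The plan is to compute the product $\bullet$ with the explicit geometric cycles furnished by the Goresky--MacPherson description and to show that, under the stated sign conditions, these cycles have disjoint supports away from the ``boundary'' set $L\triangleq\bigcup\mathcal{A}\cup\mathcal{C}B_\epsilon$. First I would record that the class $\pi_!^u(\phi_*^{x^u}(a))$ is represented by the relative cycle $c_u\triangleq(\pi^u)^{-1}\big(\phi^{x^u}(\triangle[W,u])\big)$ in the pair $(W,L)$, and likewise $c_v$ for $v$ using $y^v$; this is the content of the composite $\pi_!^u\circ\phi_*^{x^u}$ of Section~3 pulled back to $W$. Since $\bullet$ is the intersection product on $H_*(W,L)$, it is represented by a transverse intersection $c_u\cap c_v$, and if this intersection lies in $L$ it is null in $H_*(W,L)$. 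Hence it suffices to prove the support statement $\operatorname{supp}(c_u)\cap\operatorname{supp}(c_v)\subseteq L$, uniformly in $a$ and $b$; this uniformity is precisely what upgrades ``vanishing on each class'' to ``the zero map.''

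The heart of the argument is a sign analysis through $\Lambda$. Because $\ker\Lambda\supseteq u+v$, the functional descends to $\Lambda_u$ on $W/u$ and $\Lambda_v$ on $W/v$ with $\Lambda=\Lambda_u\circ\pi^u=\Lambda_v\circ\pi^v$. The relative chains in $\Delta\Delta[W,u]$ are generated by the simplices of $\triangle[W,u]$ containing both the bottom vertex $W$ and the top vertex $u$; on such a simplex $\phi^{x^u}$ is affine, so $\Lambda_u\circ\phi^{x^u}$ is affine and its value at an interior point is a convex combination $\sum_i t_i\,\Lambda_u(x^u_{w_i})$ with all $t_i>0$. By construction $\Lambda_u(x^u_w)\geq 0$ for $w\in(W,u]$ and $\Lambda_u(x^u_W)>0$, so the contribution of the vertex $W$ forces $\Lambda_u\circ\phi^{x^u}>0$ at every interior point. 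The only points of the image with $\Lambda_u=0$ therefore lie on faces omitting $W$, i.e.\ in $\phi^{x^u}(\triangle(W,u])\subseteq\bigcup\mathcal{A}/u$; pulling back by $\pi^u$ and using $(\pi^u)^{-1}(\bigcup\mathcal{A}/u)\subseteq\bigcup\mathcal{A}\subseteq L$, I conclude $c_u\cap\{\Lambda=0\}\subseteq L$, and hence $c_u\setminus L\subseteq\{\Lambda>0\}$.

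Applying the mirror-image computation to $v$ with the generic points $y^v$, for which $\Lambda_v(y^v_w)\leq 0$ on $(W,v]$ and $\Lambda_v(y^v_W)<0$, gives $c_v\setminus L\subseteq\{\Lambda<0\}$. Since $\{\Lambda>0\}$ and $\{\Lambda<0\}$ are disjoint, we obtain $\operatorname{supp}(c_u)\cap\operatorname{supp}(c_v)\subseteq L$, which by the first paragraph forces the product to vanish for all $a,b$.

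The main obstacle I anticipate is the foundational reduction itself: making rigorous that the algebraically defined product $\bullet$ (cup product followed by $\cap[W]$, transported through the isomorphisms of the Goresky--MacPherson chain $(3.1)$) is computed by the geometric transverse intersection of the representatives $c_u,c_v$, and that this intersection ``sees only the interior,'' so that an intersection contained in $L$ is homologically trivial. This requires the compatibility of the cup/cap formalism with Lefschetz--Alexander duality together with a general-position argument performed rel $L$; the delicate point is to achieve transversality by a perturbation that does not destroy the inclusions $c_u\setminus L\subseteq\{\Lambda>0\}$ and $c_v\setminus L\subseteq\{\Lambda<0\}$, for instance by perturbing each cycle within its closed half-space. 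Once this localisation principle is in place, the sign analysis above closes the proof with no further computation.
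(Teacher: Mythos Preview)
The paper does not prove this proposition itself; it simply cites \cite{[LS]} (de Longueville--Schultz) for the proof of both Propositions~3.2 and~3.3. Your proposal reconstructs precisely the argument given there: the separating linear functional $\Lambda$ vanishing on $u+v$ is used to push the two Goresky--MacPherson cycles into the opposite open half-spaces $\{\Lambda>0\}$ and $\{\Lambda<0\}$ modulo $L=\bigcup\mathcal{A}\cup\mathcal{C}B_\epsilon$, whence their intersection product vanishes. Your convexity computation showing that points of $\phi^{x^u}(\triangle[W,u])$ with $\Lambda_u\le 0$ must lie in $\phi^{x^u}(\triangle(W,u])\subseteq\bigcup\mathcal{A}/u$ is exactly the mechanism in \cite{[LS]}, and your identification of the remaining foundational step (that $\bullet$ is computed by supports of representatives, so disjointness modulo $L$ forces vanishing) is the only nontrivial ingredient beyond this sign analysis; \cite{[LS]} handles it by working directly with the Alexander duality description of $\bullet$ rather than by a general-position perturbation.
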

The proof of Prop 3.2 and Prop 3.3 sees \cite{[LS]}.

\section{the cohomology of local representation}
In Prop 3.1, Prop 3.2 and Prop 3.3, Goresky-MacPherson isomorphism and intersection product of homology groups depend on the choice of generic points $x^\mathit{u}$. If the arrangements is a $(\geq 2)$-arrangement (that is $\forall \mathit{u},\mathit{v}\in P$, if $\mathit{u}<\mathit{v}$, then $d(\mathit{u})-d(\mathit{v})\geq 2$), by Lemma 5.1 in \cite{[LS]}, Goresky-MacPherson isomorphism is independent of the choice of generic points, in that case one can obtain a complete combinatorial description of the intersection product, furthermore one can describe the integral cohomology ring structure of $W \setminus \bigcup \mathcal{A}$.

However, it is not easy for real arrangements to satisfy $(\geq 2)$ condition. There are many cases for real arrangements such that the intersection of two subspaces decreases by only one dimension, and this is the key reason  why real arrangements are much more difficult to deal with than complex arrangements. 

In our concerning case, it is a pity that $F_{\mathbb{Z}_2^m}(\mathbb{R}^m,n)$ doesn't satisfy the $(\geq 2)$ condition, so we can't get rid of the influence of the choice of generic points. But if we consider the mod 2 cohomology ring instead of the integral cohomology ring, we can overcome this barrier. Thus we can get the following lemma easily.
\begin{lem}
Under $\mathbb{Z}_2$-coefficient, Goresky-MacPherson isomorphism and intersection products depend only on the intersection poset $P$.
\end{lem}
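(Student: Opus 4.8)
The plan is to isolate the two places where the constructions of Section 3 depend on geometry rather than on combinatorics --- the choice of generic points in the Goresky--MacPherson isomorphism, and the sign factors in the product formulae --- and to show that both disappear once we pass to $\Z_2$ coefficients.

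\textbf{The products.} First I would dispose of the sign factors. In Prop 3.2 the structure constant is $\epsilon_{\mathit{u},\mathit{v}}(-1)^{l(d(W)-d(\mathit{u}))}$; both $\epsilon_{\mathit{u},\mathit{v}}$, the degree of a linear isomorphism, and $(-1)^{l(d(W)-d(\mathit{u}))}$ are units $\pm 1$ in $\Z$, hence equal to $1$ in $\Z_2$. Thus over $\Z_2$ the formula collapses to $\pi_!^{\mathit{u}}(\phi_*^{x^{\mathit{u}}}(a))\bullet\pi_!^{\mathit{v}}(\phi_*^{x^{\mathit{v}}}(b))=\pi_!^{\mathit{u}\cap\mathit{v}}(\phi_*^{x^{\mathit{u}\cap\mathit{v}}}(\vee_*(a\times b)))$ when $\mathit{u}+\mathit{v}=W$, while Prop 3.3 gives $0$ when $\mathit{u}+\mathit{v}\neq W$. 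Here $\vee_*$ is induced by the poset join $\vee$ and the grading is recorded by the dimension function $d$, so both are combinatorial; the only remaining geometric input is the generic-point dependence hidden inside the maps $\phi_*^{x^{\mathit{u}}}$.

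\textbf{Independence of generic points.} The heart of the argument is to show that over $\Z_2$ the class $\phi_*^{x^{\mathit{u}}}$ on $H_*(\Delta\Delta[W,\mathit{u}];\Z_2)$ is independent of the choice of $x^{\mathit{u}}$. A generic point assigns to each $\mathit{v}\in[W,\mathit{u}]$ a point $x_\mathit{v}^{\mathit{u}}$ in the complement $\pi^{\mathit{u}}(\mathit{v})\setminus\bigcup_{\mathit{v}'\in(\mathit{v},\mathit{u}]}\pi^{\mathit{u}}(\mathit{v}')$, and $\phi^{x^{\mathit{u}}}$ is the affine interpolation of these values. I would argue stratum by stratum: if two systems of generic points lie in the same connected component of each such complement, then joining them by paths keeps every genericity condition satisfied and therefore gives a homotopy of $\phi^{x^{\mathit{u}}}$ \emph{as maps of pairs}, so the induced homology maps agree over any coefficients --- this is exactly the mechanism of Lemma 5.1 of \cite{[LS]}, where the $(\geq 2)$ hypothesis guarantees that each of these complements, being the complement of subspaces of codimension $d(\mathit{v})-d(\mathit{v}')\geq 2$ in $\pi^{\mathit{u}}(\mathit{v})$, is automatically connected. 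The only obstruction in general is a codimension-one wall, which occurs precisely at a covering relation with dimension drop $1$: crossing it moves $x_\mathit{v}^{\mathit{u}}$ to the opposite side of a hyperplane and reverses the local orientation of the affine image simplex, multiplying the resulting class by $-1$ over $\Z$. Since $-1=1$ in $\Z_2$, the two choices induce the same map on $\Z_2$-homology. Combining this with the first step, all the data entering the isomorphism $\sum_{\mathit{u}}\pi_!^{\mathit{u}}\circ\phi_*^{x^{\mathit{u}}}$ and the product --- the pairs $\Delta\Delta[W,\mathit{u}]$, the join $\vee_*$, and the grading by $d$ --- are read off from $P$ alone.

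\textbf{Main obstacle.} The delicate point is the orientation bookkeeping in the second step: one must verify that crossing a single wall affects $\phi_*^{x^{\mathit{u}}}$ only by an overall sign on the relevant fundamental class, with no other change to the chain-level map, and that the homotopy across components genuinely stays within maps of the pair $\Delta\Delta[W,\mathit{u}]\to(W/\mathit{u},\bigcup\mathcal{A}/\mathit{u}\cup\mathcal{C}B_\epsilon^{W/\mathit{u}})$. This amounts to re-examining the orientation conventions in the construction of $\phi^{x^{\mathit{u}}}$ and in the proof of Lemma 5.1 of \cite{[LS]}, and checking that the sole role of the $(\geq 2)$ hypothesis there is to rule out exactly these sign ambiguities --- a role rendered vacuous by $\Z_2$ coefficients. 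Once this is in place the lemma follows immediately.
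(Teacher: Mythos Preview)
Your proposal is correct and follows essentially the same route as the paper: the only dependence on generic points is through orientations/signs, which become trivial over $\Z_2$. The paper's own proof is a one-line assertion that for two choices $x^{\mathit{u}},\tilde{x}^{\mathit{u}}$ the images $\phi^{x^{\mathit{u}}}(a)$ and $\phi^{\tilde{x}^{\mathit{u}}}(a)$ ``differ merely by an orientation,'' so your wall-crossing analysis and your explicit treatment of the factors $\epsilon_{\mathit{u},\mathit{v}}$ and $(-1)^{l(d(W)-d(\mathit{u}))}$ already supply more justification than the paper does.
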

\begin{proof}
 For arbitary two generic points $x^\mathit{u}, \tilde{x}^\mathit{u}$, let $a\in C_k(\Delta\Delta[W,\mathit{u}])$, $\phi^{x^\mathit{u}}(a)$ and $\phi^{\tilde{x}^\mathit{u}}(a)$ differ merely by an orientation. Thus $\phi^{x^\mathit{u}}_*=\phi^{\tilde{x}^\mathit{u}}_* ( mod \quad2)$. It means the Goresky-MacPherson isomorphism and intersection product of homology groups are independent of the choice of generic points over $\Z_2$ coefficient.
\end{proof}
Furthermore, the intersection products can  be depicted as follows:
\begin{thm}
For an arrangement $\mathcal{A}$ with intersection poset $P$, under $\mathbb{Z}_2$-coefficients, the intersection product is given by the combinatorial data as follows.

\begin{tabular}{ccc}
$H_k(\Delta\Delta[W,\mathit{u}])\bigotimes H_l(\Delta\Delta[W,\mathit{v}])$&$\longrightarrow$&$ H_{k+l}(\Delta\Delta[W,\mathit{u}\vee\mathit{v}])$
\\$a\bigotimes b $&$ \longmapsto  $&$\Big\{\begin{matrix}
\vee_*(a\times b) & \mathit{u}+\mathit{v}=W
\\0 & otherwise
\end{matrix}$ 
\end{tabular}
\end{thm}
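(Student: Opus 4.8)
The plan is to deduce the statement directly from the Goresky--MacPherson isomorphism (Theorem 3.1) together with Propositions 3.2 and 3.3, using Lemma 4.1 to discard all signs and all dependence on generic points. First I would fix a system of generic points $x^u$ for every $u\in[W,\top]$ and use Theorem 3.1 to transport the intersection product $\bullet$ on $H_*(W,\bigcup\mathcal{A}\cup\mathcal{C}B_\epsilon)$ to the direct sum $\bigoplus_{u\in[W,\top]}H_*(\Delta\Delta[W,u])$, each summand being identified with its image under $\pi_!^u\circ\phi_*^{x^u}$. Since this map is a group isomorphism and $\bullet$ is bilinear, it suffices to compute products of generators $\pi_!^u(\phi_*^{x^u}(a))\bullet\pi_!^v(\phi_*^{x^v}(b))$ with $a\in H_k(\Delta\Delta[W,u])$ and $b\in H_l(\Delta\Delta[W,v])$, and then read the result back through the inverse of the isomorphism.

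The computation splits into the two cases governed by whether $u+v=W$. When $u+v=W$, Proposition 3.2 gives
$$\pi_!^u(\phi_*^{x^u}(a))\bullet\pi_!^v(\phi_*^{x^v}(b))=\epsilon_{u,v}(-1)^{l(d(W)-d(u))}\pi_!^{u\cap v}(\phi_*^{x^{u\cap v}}(\vee_*(a\times b))).$$
Over $\Z_2$ the integer $\epsilon_{u,v}=\pm1$ reduces to $1$, and the orientation sign $(-1)^{l(d(W)-d(u))}$ likewise reduces to $1$, so the product is exactly the class $\pi_!^{u\vee v}(\phi_*^{x^{u\vee v}}(\vee_*(a\times b)))$. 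Reading this through the isomorphism, it lies in the summand indexed by $u\vee v=u\cap v$ and equals $\vee_*(a\times b)$, which is the first line of the table. When $u+v\neq W$, Proposition 3.3 asserts that the composition of $(\pi_!^u\circ\phi_*^{x^u})\otimes(\pi_!^v\circ\phi_*^{y^v})$ with $\bullet$ is the zero map, so the product vanishes, matching the second line.

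The remaining point, and the one that will require care, is that the two propositions are proved with different conventions for the generic points: Proposition 3.2 uses the specially built $x^{u\cap v}$, whereas Proposition 3.3 requires the tilted points $x^u,y^v$ adapted to a separating functional $\Lambda$. To assemble a single well-defined combinatorial product I must know that all these choices yield the same answer modulo $2$; this is precisely the content of Lemma 4.1, which shows $\phi_*^{x^u}\equiv\phi_*^{\tilde{x}^u}\pmod 2$ for any two systems of generic points, and hence that both the Goresky--MacPherson isomorphism and the transported product depend only on the poset $P$. Invoking Lemma 4.1 thus lets me use whichever generic points each proposition demands and still obtain the stated generic-point-free formula. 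I expect this compatibility step to be the main obstacle, since it is the only place where the vanishing of the $\Z$-level signs and the generic-point ambiguities genuinely interact; once it is in hand, the case analysis above completes the identification of the product with $\vee_*(a\times b)$, respectively $0$, on the combinatorial side.
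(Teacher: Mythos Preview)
Your proposal is correct and follows essentially the same approach as the paper: the paper's proof simply cites Propositions 3.1 (the Goresky--MacPherson isomorphism), 3.2, and 3.3 and writes down the same sequence of maps you describe, transporting $\bullet$ back to $\bigoplus_w H_*(\Delta\Delta[W,w])$. Your treatment is in fact more careful than the paper's, since you explicitly invoke Lemma 4.1 to reconcile the different generic-point conventions required by Propositions 3.2 and 3.3, a point the paper leaves implicit.
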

\begin{proof}
Immediately get by Prop 3.1, Prop 3.2 and Prop 3.3 via below sequence of maps

$H_*(\Delta\Delta[W,\mathit{u}])\bigotimes H_*(\Delta\Delta[W,\mathit{v}])\longrightarrow H_*(W, \bigcup\mathcal{A}\cup\mathcal{C}B_\epsilon)\otimes H_*(W, \bigcup\mathcal{A}\cup\mathcal{C}B_\epsilon)$\\
\hspace*{5.4cm}$\overset{\bullet}{\longrightarrow}H_*(W, \bigcup\mathcal{A}\cup\mathcal{C}B_\epsilon)$\\
\hspace*{5.4cm}$\overset{\cong}{\longleftarrow}\underset{\mathit{w}\in [W,\top]}{\bigoplus}H_*(\Delta\Delta[W,\mathit{w}])$
\end{proof}

In [11], Yuzvinsky construct differential graded algebra based on atomic complex to describe the rational cohomology ring of complement sapce of complex subspace arrangement. In this paper, we adapt his construction with different definition of order of elements.
 
Since $H_k(\Delta\Delta[W,\mathit{u}])\triangleq H_k([W,\mathit{u}],(W,\mathit{u}]\cup[W,\mathit{u}))$

Let $\sigma\in C_k(\Delta\Delta[W,\mathit{u}]), \sigma\sim (W<\mathit{u}_1<\mathit{u})$, $\mathit{u}_1\in C_{k-2}(\triangle(W,\mathit{u}))$

and $(W<\mathit{u}_1<\mathit{u})\sim (W<\mathit{u}_2<\mathit{u}) \Longleftrightarrow \mathit{u}_1\sim \mathit{u}_2 $

$\therefore H_k(\Delta\Delta[W,\mathit{u}])\cong \widetilde{H}_{k-2}(\triangle(W,\mathit{u})) \qquad k\geq 2$

$H_1(\Delta\Delta[W,\mathit{u}])=\bigg \{ \begin{matrix}
\mathbb{Z}_2 & \text{if u is an atom}
\\0  &   otherwise
\end{matrix}$
~\\

Now we determine the homology of $\triangle(W,\mathit{u})$.

Let $\mathcal{A}=\{A_1,\dots,A_p\}$ be an subspace arrangement, which are called atoms. $P$ be the intersection poset.

$\sigma \subset \mathcal{A}$. $\sigma=\{A_{i_1},\dots,A_{i_k}\}$, $\vee(\sigma)=A_{i_1}\cap\dots\cap A_{i_k}$.
 
Construct atomic complex $A(P)=\{\sigma\subset \mathcal{A}|\vee(\sigma)<\top \}$ ($\top \triangleq \bigcap \mathcal{A} $ ).

By Lemma 2.1 in \cite{[Yu]} , $A(P)$ is homotopy equivalent to order complex $\triangle(W,\top)$ 

For $\mathit{u}\in P$, define  $\mathcal{A}_u=\{A_i\in \mathcal{A}|\mathit{u}\subset A_i\}$. Its corresponding intesection poset is denoted by $P_\mathit{u}$. Then $A(P_\mathit{u})\simeq \triangle(W,\mathit{u})$ 

\begin{defn}
The relative atomic (chain) complex $D=D(P)$ is the free abelian group on all subsets $\sigma=\{A_{i_1},\ldots,A_{i_p} \} \subset \mathcal{A}$, $dim(\sigma)=|\sigma|$.with its differential 
\begin{center}
$\partial:$
\begin{tabular}{rcl}
$ C_n$&$ \rightarrow $&$ C_{n-1}$
\\$\sigma$&$\mapsto $&$\sum (-1)^j \sigma \setminus{A_{i_j}}$
\end{tabular}. 
\end{center}
where the summation is taken over index j such that $\vee(\sigma\setminus{A_{i_j}})=\vee(\sigma)$. 
\end{defn}
In fact, reative atomic complex $D$ can be represented as the direct sum of complexes. Let $\sum(\mathit{u})$ denotes the simplicial complex whose simplices are all the subsets of $\mathcal{A}_\mathit{u}$, denote $\overline{A(\mathit{u})}=\sum(\mathit{u})/A(P_\mathit{u})=\{\sigma\subset{\mathcal{A}_\mathit{u}}| \vee(\sigma)=\mathit{u}\}$, let $D(\mathit{u})=C(\overline{A(\mathit{u})})$. Obviously, $\sum(\mathit{u})$ is acyclic. The following lemma is immediate by easy calculation of simplicial homology group.

\begin{lem}
\begin{enumerate}
\item  $\widetilde{H}_p(D(\mathit{u}))\simeq \widetilde{H}_{p-2}(A(P_\mathit{u}))$ \item $D=\underset{\mathit{u}\in P}{\bigoplus}D(\mathit{u})$
\end{enumerate}
\end{lem}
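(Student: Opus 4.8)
The plan is to prove (2) first, since it is the bookkeeping that organizes everything, and then to read off (1) from the long exact sequence of a pair. For (2): as a graded abelian group $D$ is free on all subsets $\sigma\subseteq\mathcal{A}$, and every such $\sigma$ has a well-defined join $\vee(\sigma)\in P$. Since each atom of $\sigma$ contains $\vee(\sigma)$, we have $\sigma\subseteq\mathcal{A}_{\vee(\sigma)}$, so $\sigma$ is a generator of $\overline{A(\vee(\sigma))}$, hence of $D(\vee(\sigma))$. Grouping the basis of $D$ according to the value $\mathit{u}=\vee(\sigma)$ therefore gives a splitting $D=\bigoplus_{\mathit{u}\in P}D(\mathit{u})$ of graded groups. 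It remains to check that $\partial$ respects this splitting: in $\partial\sigma$ only the faces $\sigma\setminus A_{i_j}$ with $\vee(\sigma\setminus A_{i_j})=\vee(\sigma)$ survive, so every term of $\partial\sigma$ has the same join as $\sigma$ and lies in the same summand. Hence $\partial(D(\mathit{u}))\subseteq D(\mathit{u})$ and the decomposition is one of chain complexes. The same computation identifies the restricted differential with the quotient differential of $\Sigma(\mathit{u})/A(P_\mathit{u})$: the full simplicial boundary on $\Sigma(\mathit{u})$ kills exactly those faces whose join strictly exceeds $\mathit{u}$, i.e. those landing in $A(P_\mathit{u})$, which is the rule used in $D$. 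As a bonus this makes $\partial^2=0$ for $D$ transparent, since each summand is a genuine relative simplicial complex.

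For (1) I first verify that $A(P_\mathit{u})$ is a genuine subcomplex of the full simplex $\Sigma(\mathit{u})$, i.e. downward closed: if $\vee(\sigma)\neq\mathit{u}$ and $\tau\subseteq\sigma$, then $\vee(\tau)\supseteq\vee(\sigma)\supsetneq\mathit{u}$, so $\vee(\tau)\neq\mathit{u}$. Thus $D(\mathit{u})=C(\overline{A(\mathit{u})})$ is the relative chain complex of the pair $(\Sigma(\mathit{u}),A(P_\mathit{u}))$, taken in the grading $\dim\sigma=|\sigma|$. The short exact sequence of chain complexes
\[ 0 \longrightarrow C(A(P_\mathit{u})) \longrightarrow C(\Sigma(\mathit{u})) \longrightarrow D(\mathit{u}) \longrightarrow 0 \]
yields a long exact sequence. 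Because $\Sigma(\mathit{u})$ is a full simplex on the vertex set $\mathcal{A}_\mathit{u}$, it is contractible and its augmented chain complex is acyclic; hence the connecting homomorphism $H_p(D(\mathit{u}))\to H_{p-1}(C(A(P_\mathit{u})))$ is an isomorphism.

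The only remaining point — and the one needing care — is the degree bookkeeping, which must produce the shift by two. There are two separate shifts in play: the connecting map contributes one, and the convention $\dim\sigma=|\sigma|$ (rather than the usual simplicial $\dim\sigma=|\sigma|-1$) contributes another. Concretely, with $\sigma$ placed in degree $|\sigma|$, comparison with the reduced (augmented) simplicial complex of $A(P_\mathit{u})$, where a $p$-element subset is a $(p-1)$-simplex and $\emptyset$ gives the degree $-1$ augmentation, shows that $H_q$ of $C(A(P_\mathit{u}))$ in this grading equals $\widetilde{H}_{q-1}(A(P_\mathit{u}))$. Combining with the connecting-map shift gives $H_p(D(\mathit{u}))\cong H_{p-1}(C(A(P_\mathit{u})))\cong\widetilde{H}_{p-2}(A(P_\mathit{u}))$, and since $D(\mathit{u})$ is a relative complex its reduced and ordinary homology coincide, matching the stated $\widetilde{H}_p(D(\mathit{u}))$.

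I expect the homological algebra to be routine and the actual obstacle to be this degree-and-augmentation accounting, together with the boundary cases. In particular I would treat $\mathit{u}=W$ separately (there $\overline{A(\mathit{u})}$ collapses to $\{\emptyset\}$, so $D(W)$ is concentrated in degree $0$ and one must check the formula against the convention $\widetilde{H}_{-1}(\emptyset)=\Z_2$) and the case where $\mathit{u}$ is an atom (where the singleton $\{\mathit{u}\}$ is no longer in $A(P_\mathit{u})$ and the augmentation behaves differently). These are precisely the spots where the reduced versus augmented conventions could be mishandled, and pinning them down is the main thing to get right.
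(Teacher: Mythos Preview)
Your proposal is correct and matches the paper's intended approach. The paper does not give a proof at all: it simply remarks that $\Sigma(\mathit{u})$ is acyclic and declares the lemma ``immediate by easy calculation of simplicial homology group,'' which is precisely the long-exact-sequence argument from the pair $(\Sigma(\mathit{u}),A(P_\mathit{u}))$ that you spell out, together with the grading bookkeeping for the $|\sigma|$ versus $|\sigma|-1$ convention.
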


So $\underset{\mathit{u}\in P}{\bigoplus}H_k(\Delta\Delta[W,\mathit{u}])\cong \underset{\mathit{u}\in P}{\bigoplus}\widetilde{H}_{k-2}(\triangle(W,\mathit{u}))\cong \underset{\mathit{u}\in P}{\bigoplus}\widetilde{H}_{k-2}(A(P_\mathit{u}))$

$\cong \underset{\mathit{u}\in P}{\bigoplus} \widetilde{H}_kD(\mathit{u})\cong \widetilde{H}_k(D)\cong H_k(D) \qquad k\geq 2$

When k=1, $\underset{\mathit{u}\in P}{\bigoplus}H_1(\Delta\Delta[W,\mathit{u}])\cong H_1(D)$

Combine them together, $\underset{\mathit{u}\in P}{\bigoplus}H_k(\Delta\Delta[W,\mathit{u}])\cong H_k(D)$ for $k\geq 1$
~\\

Now turn to the product, look at $D(\mathit{u})$, it is generated by $\sigma\in \mathcal{A}_\mathit{u}, s.t. \vee(\sigma)=\mathit{u}$

Recall two maps (2.1) and (2.2).

$H_k(\Delta\Delta[W,\mathit{u}])\xrightarrow{\pi_!^\mathit{u}\circ \phi_*^{x^\mathit{u}}}H_{d(\mathit{u})+k}(W, \bigcup \mathcal{A}\cup\mathcal{C}B_\epsilon)\xleftarrow[\cong]{\cap[W]\circ \mathit{i}^*}H^{d(W)-d(\mathit{u})-k}(W\setminus \bigcup \mathcal{A})$
  
To describe the cup product, we are to define a new differential graded algebra $\widetilde{D}$ .
\begin{defn}
differential graded algebra $\widetilde{D}$ is the free abelian group on all subsets $\sigma=\{A_{i_1},\ldots,A_{i_p} \} \subset \mathcal{A}$ let $deg(\sigma)=d(W)-\abs{\sigma}-d(\vee(\sigma))$, with its differential 
\begin{center}
$\delta:$
\begin{tabular}{rcl}
$ C^n$&$ \rightarrow $&$ C^{n+1}$
\\$\sigma$&$\mapsto $&$\sum (-1)^j \sigma \setminus{A_{i_j}}$
\end{tabular}. 
\end{center}
where the summation is taken over index j such that $\vee(\sigma\setminus{A_{i_j}})=\vee(\sigma)$.
and  define the multiplication on algebra $\widetilde{D}$ as following: $\sigma \circ \tau=\bigg \{ \begin{matrix}
\sigma\cup\tau & if \vee(\sigma)+\vee(\tau)=W
\\ 0 & otherwise
\end{matrix}$  
\end{defn}
  
Thus according to Goresky-MacPherson isomorphism and Alexander Duality, we get our result
\begin{thm}
there is a ring isomorphism $H^*(M(\mathcal{A});\Z_2)\cong H^*(\widetilde{D};\Z_2)$
\end{thm}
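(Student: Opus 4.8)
The plan is to assemble the stated ring isomorphism from three ingredients already in place: the Goresky--MacPherson isomorphism, the Alexander-duality isomorphism of display (3.1), and the combinatorial model provided by the relative atomic complex $D$ (Definition 4.3) together with the graded algebra $\widetilde{D}$ (Definition 4.5). I would proceed in two stages: first build an additive isomorphism with the correct grading, and then upgrade it to a ring map by matching the two products. Throughout, Lemma 4.1 guarantees that every map in sight is, over $\mathbb{Z}_2$, independent of the choice of generic points, so the composite will be canonical.

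For the additive statement I would fix a cohomological degree $j$ and chain the isomorphisms. The inclusion together with Alexander duality gives $H^j(M(\mathcal{A}))\cong H_{d(W)-j}(W,\bigcup\mathcal{A}\cup\mathcal{C}B_\epsilon)$, and the Goresky--MacPherson decomposition, in which the summand indexed by $\mathit{u}$ enters through $\pi_!^\mathit{u}\circ\phi_*^{x^\mathit{u}}$ and hence shifts homological degree by $d(\mathit{u})$, yields $H^j(M(\mathcal{A}))\cong\bigoplus_{\mathit{u}}H_{d(W)-d(\mathit{u})-j}(\Delta\Delta[W,\mathit{u}])$. On the other side, $\widetilde{D}$ is, as a graded group, precisely the relative atomic complex $D$ regraded: on the summand $D(\mathit{u})$ (generators $\sigma$ with $\vee(\sigma)=\mathit{u}$) one has $\deg(\sigma)=d(W)-\abs{\sigma}-d(\mathit{u})$, which turns the boundary $\partial$ of $D$ into the coboundary $\delta$ of $\widetilde{D}$ and sends chain degree $p=\abs{\sigma}$ to cohomological degree $j=d(W)-d(\mathit{u})-p$. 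Using Lemma 4.4 together with $A(P_\mathit{u})\simeq\triangle(W,\mathit{u})$ and the identification $H_k(\Delta\Delta[W,\mathit{u}])\cong\widetilde{H}_{k-2}(\triangle(W,\mathit{u}))$ recorded above, I get $\widetilde{H}_p(D(\mathit{u}))\cong H_p(\Delta\Delta[W,\mathit{u}])$, whence $H^j(\widetilde{D})\cong\bigoplus_{\mathit{u}}H_{d(W)-d(\mathit{u})-j}(\Delta\Delta[W,\mathit{u}])$ with the identical index bookkeeping. Comparing the two computations produces the additive isomorphism.

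For the multiplicative statement I would transport the cup product on $H^*(M(\mathcal{A});\mathbb{Z}_2)$ through Alexander duality into the intersection product $\bullet$ on $H_*(W,\bigcup\mathcal{A}\cup\mathcal{C}B_\epsilon)$, which by Theorem 4.2 is, under the Goresky--MacPherson decomposition, the combinatorial join $\vee_*(a\times b)$ when $\mathit{u}+\mathit{v}=W$ and $0$ otherwise. It then remains to check that the chain identification $H_*(\Delta\Delta[W,\mathit{u}])\cong H_*(D(\mathit{u}))$ carries the simplicial join $\vee_*$ to the union $\sigma\cup\tau$ defining the product $\circ$ of $\widetilde{D}$, and that the vanishing conditions agree. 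The latter is immediate: $\vee(\sigma)+\vee(\tau)=\mathit{u}+\mathit{v}$, so $\mathit{u}+\mathit{v}=W$ is literally $\vee(\sigma)+\vee(\tau)=W$; and when this holds $\sigma$ and $\tau$ are forced to be disjoint, since a shared atom $A_i$ would give $\vee(\sigma)+\vee(\tau)\subseteq A_i\neq W$, so that $\abs{\sigma\cup\tau}=\abs{\sigma}+\abs{\tau}$ and $\deg(\sigma\cup\tau)=\deg(\sigma)+\deg(\tau)$, confirming the product has the expected degree.

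The hard part will be this last chain-level compatibility: showing that the equivalence $\triangle(W,\mathit{u})\simeq A(P_\mathit{u})$ of Yuzvinsky's Lemma 2.1 intertwines the abstract join $\vee_*$ on order complexes with the union product $\cup$ on the atomic model. I would treat it by writing down an explicit chain-homotopy equivalence between the order complex of the open interval and the atomic complex and verifying it is compatible with the two products at the chain level. Working mod $2$ is what makes this feasible, since all the orientation factors $\epsilon_{\mathit{u},\mathit{v}}$ and the signs $(-1)^{\bullet}$ appearing in Propositions 3.2--3.3 and in the shuffle formula for $\times$ collapse to $1$. Once this compatibility is established, the additive isomorphism of the second paragraph is automatically a ring map, which completes the proof.
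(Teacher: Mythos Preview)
Your proposal is correct and follows essentially the same route as the paper: combine Alexander duality (display (3.1)), the Goresky--MacPherson isomorphism, the identification of $H_*(\Delta\Delta[W,\mathit{u}])$ with the homology of the relative atomic complex via Lemma~4.4, and Theorem~4.2 for the product. You are in fact more careful than the paper in isolating the chain-level compatibility between the join $\vee_*$ on order complexes and the union product on the atomic model as a point requiring verification; the paper's own proof simply asserts that the product on $\widetilde{D}$ matches the intersection product without spelling this out.
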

\begin{proof}
We have proved  $\underset{\mathit{u}\in P}{\bigoplus}H_k(\Delta\Delta[W,\mathit{u}])\cong H_k(D)$ as group,
together with Goresky-MacPherson isomorphism:
$$\sum_{\mathit{u}\in[W,\top]}\pi_!^\mathit{u}\circ \phi_*^{x^\mathit{u}}:\bigoplus_{\mathit{u}\in[W,T]}H_*(\Delta\Delta[W,\mathit{u}])\longrightarrow H_*(W,\bigcup\mathcal{A}\cup\mathcal{C}B_\epsilon)$$
we get $H_*(W,\bigcup\mathcal{A}\cup\mathcal{C}B_\epsilon)\cong H_*(D) $ as groups.

Since we have 
$$H^k(W \setminus \bigcup \mathcal{A})\xrightarrow[\cong]{i^*}H^k(B_\epsilon \setminus\bigcup\mathcal{A})\xrightarrow[\cong]{\cap[W]}H_{d(W)-k}(W, \bigcup\mathcal{A}\cup\mathcal{C}B_\epsilon)$$
And $H^*(\widetilde{D})$ is dual to $H_*(D)$, so $H^*(W \setminus \bigcup \mathcal{A})\cong H^*(\widetilde{D})$ as group.

Now we turn to product. Intersection product in Theorem 4.2 corresponds to cup product. In $H^*(\widetilde{D})$, the product defined in $H^*(\widetilde{D})$ dual to intersection product in $H_*(W, \bigcup\mathcal{A}\cup\mathcal{C}B_\epsilon)$ , therefore agree with cup product. so there is a ring isomorphism $H^*(W \setminus \bigcup \mathcal{A})\cong H^*(\widetilde{D})$.
\end{proof}

The shortcoming of rational model method is that we can not read the generators and relations explictly from the differential graded algebra. In my opinion, this shortcoming comes from the complexity of real arrangements.

\section{Example}
We take $F_{\mathbb{Z}_2^2}(\mathbb{R}^2,2)$ as an example to verify the Theorem 4.6.

Because $F_{\mathbb{Z}_2^2}(\mathbb{R}^2,2)=\C^2\setminus \mathcal{A} \qquad \mathcal{A}=\{H_1,H_2,H_3,H_4\}$ where 

\hspace*{1cm} $H_1=\{((x_1,y_1),(x_2,y_2))\in \C \times \C | x_1=x_2, y_1=y_2\}$

\hspace*{1cm} $H_2=\{((x_1,y_1),(x_2,y_2))\in \C \times \C | x_1=x_2, y_1=-y_2\}$

\hspace*{1cm} $H_3=\{((x_1,y_1),(x_2,y_2))\in \C \times \C | x_1=-x_2, y_1=-y_2\}$

\hspace*{1cm} $H_4=\{((x_1,y_1),(x_2,y_2))\in \C \times \C | x_1=-x_2, y_1=y_2\}$

The differential graded algebra $\widetilde{D}$ is stated as following:

\hspace*{0.5cm}$deg(\emptyset)=4-0-4=0$

\hspace*{0.5cm}$\delta(\emptyset)=0$
~\\

\hspace*{0.5cm}$deg(\sigma)=d(W)-\abs{\sigma}-d(\vee(\sigma))$

\hspace*{0.5cm}$deg(1)=deg(2)=deg(3)=deg(4)=4-1-2=1$

\hspace*{0.5cm}$\delta(1)=\delta(2)=\delta(3)=\delta(4)=0$
~\\

\hspace*{0.5cm}$deg(12)=deg(14)=deg(23)=deg(34)=4-2-1=1$

\hspace*{0.5cm}$deg(13)=deg(24)=4-2-0=2$

\hspace*{0.5cm}$\delta(12)=\delta(23)=\delta(34)=\delta(14)=\delta(24)=\delta(13)=0$
~\\

\hspace*{0.5cm}$deg(123)=deg(124)=deg(234)=deg(134)=4-3-0=1$

\hspace*{0.5cm}$\delta(123)=\delta(134)=13, \hspace*{0.5cm}\delta(124)=\delta(234)=24$
~\\

\hspace*{0.5cm}$deg(1234)=4-4-0=0$

\hspace*{0.5cm}$\delta(1234)=123+124+134+234$

Through easy caculation, we choose (1),(2),(3),(4),(12),(14),(23),(24),(123)+(134) as the generators of $H^1(F_{\mathbb{Z}_2^2}(\mathbb{R}^2,2),\Z_2)$, $(\emptyset)$ as the generators of $H^2(F_{\mathbb{Z}_2^2}(\mathbb{R}^2,2),\Z_2)$, thus $H^k(F_{\mathbb{Z}_2^2}(\mathbb{R}^2,2),\Z_2)=\bigg \{  \begin{matrix}
0&k\geq 2\\
\Z_2^9& k=1\\
\Z_2& k=0
\end{matrix}$, where products vanish. 

On the other hand, since $F_{\mathbb{Z}_2^2}(\mathbb{R}^2,2)=\C^2\setminus \mathcal{A}$, it can  deforme retract to a torus with eight points removed, which has homotopy type of the wedge of nine circles. Thus, the caculation of $H^k(F_{\mathbb{Z}_2^2}(\mathbb{R}^2,2),\Z_2)$ agrees in two different ways ---geometric and combinatoric.

\renewcommand{\refname}{reference}


\begin{thebibliography}{12}
\bibitem{[DJ]} M. Davis and T. Januszkiewicz, Convex polytopes, Coxeter orbifolds and torus actions, Duke Math. J. 61 (1991), 417-451 
\bibitem{[FN]} Fadell, Edward and Neuwirth, Lee, Configuration spaces,  Mathematica Scandinavica, 10, (1962), 111--118
\bibitem{[VA]} V. I. Arnold, The cohomology ring of the group of dyed braids, Mat. Zametki 5 (1969) 227-231
\bibitem{[CF]} Cohen, Fred. The homology of $C_{n+1}$-spaces, n$\geq$ 0. The homology of iterated loop spaces (1976): 207-351.
\bibitem{[FZ00]} E. M. Feichtner, G.M. Ziegler, The integral cohomology algebras of ordered conguration spaces of spheres, Doc. Math. 5 (2000) 115-139
\bibitem{[FZ02]} E. M. Feichtner and G. M. Ziegler, On orbit conguration spaces of spheres, Arrangements in Boston: a Conference on Hyperplane Arrangements (1999). Topology Appl. 118 (2002), 85-102
\bibitem{[FM]} Fulton W, MacPherson R. A compactification of configuration spaces. Annals of Mathematics, 1994, 139(1): 183-225 
\bibitem{[To]} Totaro B. Configuration spaces of algebraic varieties[J]. Topology, 1996, 35(4): 1057-1067.
\bibitem{MA} M.A. Xicoténcatl, Orbit Configuration spaces, infinitesimal braid relations in homology and equivariant loop spaces, Ph.D. Thesis, The University of Rochester, 1997
\bibitem{[CLW]} Junda Chen,  Zhi L\"u, Jie Wu. Orbit configuration spaces of small covers and quasi-toric manifolds. arXiv preprint arXiv:1111.6699, 2011
\bibitem{[DC]} De Concini C, Procesi C. Wonderful models of subspace arrangements[J]. Selecta Mathematica, 1995, 1(3): 459-494
\bibitem{[LS]} de Longueville M, Schultz C. A. The cohomology rings of complements of subspace arrangements. Mathematische Annalen, 2001, 319(4): 625-646
\bibitem{[Yu]} Yuzvinsky S. Rational model of subspace complement on atomic complex. Publ. Inst. Math.(Beograd)(NS), 1999, 66(80): 157-164
\bibitem{[Kh]} Khovanov M. Real K($\pi$,1) arrangements from finite root systems. Math. Res. Lett, 1996, 3(2): 261-274
\bibitem{[MR]} Mark Goresky, Robert MacPherson. Stratified Morse theory. Springer Verlag, 1988 
\bibitem{Ran} Richard Randell. The fundamental group of the complement of a union of complex hyperpalnes. Invent.math. 1982, 69, 103-108

\end{thebibliography}
\end{document}